\theoremstyle{definition}
\newtheorem{dfn}{Definition}
\newtheorem{thm}[dfn]{Theorem}
\newtheorem{pro}[dfn]{Proposition}
\newtheorem{ex}[dfn]{Example}
\DeclareMathOperator{\kp}{KP} 
\DeclareMathOperator{\rep}{Rep} 
\DeclareMathOperator{\hilb}{Hilb} 
\DeclareMathOperator{\mor}{Mor} 
\DeclareMathOperator{\id}{id}
\DeclareMathOperator{\R}{R}
\title{Kac--Paljutkin Quantum Group as a Quantum Subgroup of the quantum $SU(2)$}
\author{Megumi Kitagawa}
\begin{document}
\email{g1770603@edu.cc.ocha.ac.jp}
\address{Department of Mathematics, Ochanomizu University,
Otsuka 2-1-1, Bynkyo-ku, 112-8610 Tokyo, Japan}

\begin{abstract}
We show that the Kac--Paljutkin Hopf algebra appears as a quotient of $C(SU_{-1}(2))$, which means that
the corresponding quantum group $G_{\kp}$ can be regarded as a quantum subgroup of $SU_{-1}(2)$.
We combine the fact that corepresentation category of the Kac--Paljutkin Hopf algebra is a 
Tambara--Yamagami tensor category associated with the Krein 4-group
and the method of graded twisting of Hopf algebras, to construct the Hopf $*$-homomorphism.
\end{abstract}

\date{February 20, 2019}

\maketitle

\section{Introduction}
The theory of operator algebras emerged from quantum mechanics in the study of Murray and von Neumann 
to give it a mathematical framework. 
In this theory, the Gelfand--Naimark theorem states that a commutative operator algebra (unital C$^*$-algebra) 
is isomorphic to the algebra of continuous functions on some compact space.
According to this fundamental result, a general operator algebra can be seen as an algebra of continuous functions on 
a hypothetical ``noncommutative'' compact topological space. 

The next step is constructing additional structure to such objects. 
One example is the theory of compact quantum groups carried out by Woronowicz, 
which gives additional ``group structure'' on such spaces \cite{zbMATH04020060}. 
The most important example is the quantum $SU_q(2)$, constructed in \cite{zbMATH04107479}.
This gives a one parameter deformation of the algebra of functions on the compact group $SU(2)$ as 
a C$^*$-algebra with the coproduct, which explains the group law on the noncommutative space $SU_q(2)$.
The case of $q = 1$ corresponds to the algebra of $SU(2)$.
Parallel to Woronowicz's work, Drinfeld and Jimbo defined $q$-deformation of semisimple Lie groups, 
which are new Hopf algebras, by deforming universal enveloping algebras of semisimple Lie groups 
through the algebraic study of quantum integrable systems.
The C$^*$-algebra of $SU_q(2)$ contains a dense Hopf $*$-algebra of matrix coefficients of unitary corepresentations,
which can be regarded as the Hopf dual of the Drinfeld--Jimbo $q$-deformation Hopf algebra.

\smallskip
A significant feature of Woronowicz's construction is that the negative range $q < 0$ is allowed, 
which is different from naively setting $q$ to be a negative number in the Drinfeld--Jimbo construction.
In particular, a concrete description for $q=-1$ is given by Zakrzewski's realization of $C(SU_{-1}(2))$ as 
a C$^*$-subalgebra of $M_2(C(SU(2)))$ \cite{zbMATH04196226}.
This technique is also useful for computation in K-theory of algebra $C(SU_{-1}(2))$ \cite{MR3327485}.

In this paper, we are interested in the quantum subgroups of $SU_{-1}(2)$. 
An pioneering study was carried out by Podle\'{s} \cite{zbMATH00763767}, 
who investigated the subgroups and the quotient spaces of quantum $SU(2)$. 
The complete classification of quantum homogeneous spaces over $SU_q(2)$ realized as coideals is obtained by Tomatsu \cite{TOMATSU20081},
inspired by Wassermann's classification of ergodic actions of $SU(2)$ \cite{Wassermann1988}.
Their results give classification in terms of graphs.
According to the McKay correspondence, the homogeneous spaces for $SU(2)$ is classified by the extended Dynkin diagrams, 
and their results provide very similar picture. 

By Woronowicz's Tannaka--Krein duality \cite{zbMATH04086607}, a compact quantum group $G$ can be recovered from
their representation category $\rep G$, the C$^*$-tensor category of finite unitary representations, and the fiber functor.
The $SU_q(2)$ is special in this respect, because its representation category has a universality for 
the fundamental representation and the associated morphism which solves its conjugate equations.
The Tannaka--Krein duality for compact quantum homogeneous spaces over a compact quantum group $G$, 
established by De Commer and Yamashita, says that such homogeneous spaces correspond to module C$^*$-categories over $\rep G$.
Such module categories can be also described in terms of tensor functors from $\rep G$ to a category of bi-graded Hilbert spaces \cite{MR3121622},
the quantum $SU(2)$ case explained in detail in \cite{MR3420332}.
The universality of the representation category of $SU_q(2)$ implies that the quantum homogeneous spaces over $SU_q(2)$ 
are classified by graphs, generalizing the McKay correspondence.

\medskip
The Kac--Paljutkin Hopf algebra was introduced by Kac and Paljutkin as the smallest example of semisimple Hopf algebras 
which is neither commutative (function algebra of finite group) nor cocommutative (group algebra of finite group) \cite{MR0208401}.
In this paper we show that this algebra appears as a quotient of $C(SU_{-1}(2))$.
Conceptually, the corresponding quantum group $G_{\kp}$ can be regarded as a quantum subgroup of $SU_{-1}(2)$, 
and quotient map of Hopf algebras is ``restriction'' of functions.

A key fact for us is the corepresentation category of the Kac--Paljutkin algebra can be realized as 
a Tambara--Yamagami tensor category \cite{TAMBARA1998692} associated with the Krein 4-group, $K_4 = \mathbb{Z}/2\mathbb{Z} \times \mathbb{Z}/2\mathbb{Z}$.

We use the graded twist method of Bichon--Neshveyev--Yamashita \cite{zbMATH06679665} as another crucial technique to obtain Hopf $*$-homomorphism from $C(SU_{-1}(2))$.
This twisting gives a useful description of the Hopf algebra $C(SU_{-1}(2))$ as a deformation of the Hopf algebra $C(SU(2))$, suited to study of its Hopf quotients.
We apply their method for describing quantum subgroups of a compact quantum group obtained as the graded twisting of a genuine compact group.

\smallskip
This paper is organized as follows.
Section 2 is a preliminary section on compact quantum groups, the Tambara--Yamagami tensor categories, and the graded twisting of Hopf algebras. 
We also recall a presentation of the Kac--Paljutkin algebra following Tambara--Yamagami. 
Lastly in this section, we describe the construction of the graded twisting of Hopf algebras and then recall 
that the Hopf algebra $C(SU_{-1}(2))$ is isomorphic to the graded twisting of $C(SU(2))$.
In Section 3, we give a realization of Kac--Paljutkin Hopf algebra as a quotient of $C(SU_{-1}(2))$.
An essential ingredient in our computation is comparison of the two different kinds of projective representations of the Krein 4-group.
In Section 4, we describe the associated coideal which is one of the type $D_4^*$ discussed in \cite{TOMATSU20081},
which is also suggested in \cite{zbMATH00763767}.

\section{Preliminaries}
\subsection{Compact quantum groups}
For general theory of compact quantum groups, we refer to \cite{MR3204665}.
When we deal with C$^*$-algebras, the symbol $\otimes$ denotes the minimal tensor product.

\begin{dfn}
 A compact quantum group is a pair $(A, \Delta)$ of an unital C$^*$-algebra $A$ and an unital $*$-homomorphism 
 $\Delta \colon A \to A \otimes A$ called comultiplication such that
 	\begin{enumerate}
	 \item (coassociativity) $(\Delta \otimes \iota ) \Delta = (\iota \otimes \Delta) \Delta$,
	 \item (cancelation property) the spaces
	 \begin{align*}
	  (A \otimes 1) \Delta (A) = \text{span}\{ ( a \otimes 1) \Delta (b) | a,b \in  A \}, \
	  (1 \otimes A) \Delta(A) =  \text{span}\{ ( 1 \otimes a) \Delta (b) | a,b \in  A \}
	 \end{align*}
	 are dense in $A \otimes A$.
	\end{enumerate}
\end{dfn}

\begin{ex} 
\textup{
Let $G$ be a compact group. Then a compact quantum group $(A, \Delta)$ can be constructed as follows.
An unital C$^*$-algebra $A$ is the algebra $C(G)$ of complex valued continuous functions on $G$. 
In this case $A \otimes A$ can be identified with $C(G \times G)$ so the comultiplication 
$\Delta \colon C(G) \to C( G \times G)$ is given by
\[
 \Delta(f)(g,h) = f(gh) \ \text{for all} \ f \in C(G), \ g,h \in G.
\]
 }
\end{ex}
Any compact quantum group $(A, \Delta)$ with abelian $A$ is of this form.
Therefore we write  $(C(G), \Delta)$  for any compact quantum group $G$.

\begin{dfn} 
 \label{SUq2}
Let $q$ be a real number, $|q| \leq 1$, and $q \neq 0$.
The quantum $SU(2)$ group $SU_q(2)$ is defined as follows.
The algebra $C(SU_q(2))$ is the universal C$^*$-algebra generated by $\alpha$ and $\gamma$ such that 
\[
(u^q_{ij})_{i,j} = 
\begin{pmatrix}
\alpha & -q\gamma^* \\
\gamma & \alpha^*
\end{pmatrix} \text{is unitary.}
\]
The comultiplication $\Delta$ is defined by
\[
\Delta (u^q_{ij}) = \sum_k u^q_{ik} \otimes u^q_{kj}.
\]
Explicitly, we can write this comultiplication as
\begin{eqnarray*}
\begin{aligned}
	\Delta(\alpha) = \alpha \otimes \alpha - q \gamma^* \otimes \gamma, \
	\Delta(\gamma) = \gamma \otimes \alpha + \alpha^* \otimes \gamma.
\end{aligned}
\end{eqnarray*} 
\end{dfn}
From now unless we want to be specific about $q$ we write $u_{ij}$ for $u^q_{ij}$.
If $q=1$ then we can get the usual compact group $SU(2)$. 
For $q \neq 1$ the compact quantum group $SU_q(2)$ can be considered as deformations of $SU(2)$.

\begin{dfn} 
 \label{AoF}
Let $F \in GL_n(\mathbb{C})$, $n \geq 2$, such that  $F \overline{F} = \pm I_n$, where
$\overline{F}$ denotes the matrix with complex conjugates in every entries of $F$. 
The algebra $C(O_F^+)$ is the universal C$^*$-algebra generated by $u_{ij}, 1 \leq i,j \leq n,$ such that
\[
U=(u_{ij})_{i,j} \ \text{is unitary and} \ U = F U^c F^{-1} \hspace{0.3cm} (U^c = (u_{ij}^*)_{i,j}).
\]
The comultiplication $\Delta \colon C(O_F^+) \to C(O_F^+) \otimes C(O_F^+)$ is defined by 
\[
\Delta (u_{ij}) = \sum_k u_{ik} \otimes u_{kj}.
\]
This compact quantum group $O_F^+$ is called the free orthogonal quantum group associated with $F$.
\end{dfn}
We note that $SU_q(2)$ is an example of the free orthogonal quantum group by taking the matrix
\[
F = 
\begin{pmatrix}
0 & - \text{sgn}(q) |q|^{\frac{1}{2}} \\
|q|^{-\frac{1}{2}} & 0
\end{pmatrix}.
\]

\subsection{Tambara--Yamagami tensor category}

One of the Tambara--Yamagami tensor categories \cite{TAMBARA1998692} arising from the
Klein $4$-group $K_4 = \mathbb{Z}/2\mathbb{Z} \oplus \mathbb{Z}/2\mathbb{Z}$ is realised as 
the category of representations of the Kac--Paljutkin Hopf algebra \cite{TAMBARA1998692}.
Let us recall that the elements in the notation of \cite{TAMBARA1998692} of $K_4 = \{ e,a,b,c \}$
satisfies the relations $a^2 = b^2= c^2 =e, \ ab=c,\ bc=a, ca=b$.
What we focus on is the tensor category $\mathcal{C}(\chi, \tau)$ corresponding to 
the nondegenerate symmetric bicharacter $\chi = \chi_c$ of $K_4$
which is defined by
\begin{align*}
\chi_c(a,a) = \chi_c(b,b) = -1,\ \chi_c(a,b) =1,
\end{align*}
and the parameter $\tau = \frac{1}{2}$ satisfying $\tau^2 = \frac{1}{| K_4 |}$.
Its objects are finite direct sums of elements in $ S = K_4 \cup \{ \rho \}$.
Sets of morphisms between elements in $S$ are given by
\[
\mor (s,s') = 
\begin{cases}
	\mathbb{C} & s = s', \\
	0 & s \neq s',
\end{cases} 
\]
so $S$ is the set of irreducible classes of $\mathcal{C}(\chi, \tau)$.
Tensor products of elements in $S$ are given by  
\[
s \otimes t = st,\ s \otimes \rho = \rho = \rho \otimes s, \ \rho \otimes \rho = \bigoplus_{s \in K_4} s, \ (s,t \in K_4)
\]
and the unit object is $e$.
Associativities $\varphi$ are given by 
\begin{align*}
	\varphi_{s,t,u} &= \text{id}_{stu},&
	\varphi_{s,t,\rho} &= \varphi_{\rho,s,t} = \text{id}_{\rho},\\
	\varphi_{s,\rho,t} &=  \chi_c (s,t) \text{id}_{t}, &
	\varphi_{s,\rho,\rho} &= \varphi_{\rho,\rho,s} = \bigoplus_{k \in K_4} \text{id}_{k},\\
	\varphi_{\rho,s,\rho} &= \bigoplus_{k \in K_4} \chi_c (s,t) \text{id}_{k}, &
	\varphi_{\rho, \rho, \rho} &= \left( \frac{1}{2} \chi_c (k,l)^{-1} \text{id}_{\rho} \right)_{k,l} \colon \bigoplus_{k \in K_4} \rho \to \bigoplus_{l \in K_4} \rho,
\end{align*}
for $s,t,u \in K_4$.
This category is identified with the representation category of Kac--Paljutkin quantum group $G_{\kp}$, that is
\[
\mathcal{C} \left( \chi_c, \frac{1}{2} \right) \simeq \rep(G_{\kp}) 
\]
as tensor categories.
Here $(C(G_{\kp}), \Delta)$ is the Kac--Paljutkin algebra, that is, the eight dimensional Hopf algebra  
which is the noncommutative and noncocommutative algebra.
It is given by
\[
C(G_{\kp}) = \mathbb{C} \cdot {\epsilon} \oplus \mathbb{C} \cdot {\alpha} \oplus 
\mathbb{C} \cdot {\beta} \oplus \mathbb{C} \cdot {\gamma} \oplus M_2(\mathbb{C}),
\]
as an ($*$-)algebra.
The comultiplication $\Delta \colon C(G_{\kp}) \to C(G_{\kp}) \otimes C(G_{\kp})$ is defined by
 
\begin{align}
\begin{aligned}
  \Delta(\epsilon)
  ={}&
  \epsilon \otimes \epsilon + \alpha \otimes \alpha + \beta \otimes \beta + \gamma \otimes \gamma 
  + \frac{1}{2} \sum_{1 \leq i,j \leq 2} \epsilon_{ij} \otimes \epsilon_{ij},  \\
 \Delta(\alpha)
  ={}&
  \epsilon \otimes \alpha + \alpha \otimes \epsilon + \beta \otimes \gamma + \gamma \otimes \beta 
  \\&+
 \frac{1}{2} (\epsilon_{11} \otimes \epsilon_{22} + i \epsilon_{12} \otimes \epsilon_{21}
			- i \epsilon_{21} \otimes \epsilon_{12} + \epsilon_{22} \otimes \epsilon_{11}),  \\
  \Delta(\beta)
  ={}&
   \epsilon \otimes \beta + \beta \otimes \epsilon + \alpha \otimes \gamma + \gamma \otimes \alpha 
  \\&+
 \frac{1}{2} (\epsilon_{11} \otimes \epsilon_{22} - i \epsilon_{12} \otimes \epsilon_{21}
			+ i \epsilon_{21} \otimes \epsilon_{12} + \epsilon_{22} \otimes \epsilon_{11}), \\
  \Delta(\gamma) 
  ={}&
  \epsilon \otimes \gamma + \gamma \otimes \epsilon +\alpha \otimes \beta + \beta \otimes \alpha 
  \\&+
 \frac{1}{2} (\epsilon_{11} \otimes \epsilon_{11} - \epsilon_{12} \otimes \epsilon_{12}
			- \epsilon_{21} \otimes \epsilon_{21} + \epsilon_{22} \otimes \epsilon_{22}),  \\
  \Delta(x)
  ={}&
   \epsilon \otimes x +\alpha \otimes u_{\alpha} x u_{\alpha}^* 
		+ \beta \otimes u_{\beta} x u_{\beta}^* + \gamma \otimes  u_{\gamma} x u_{\gamma}^* 
  \\&+
 x \otimes \epsilon + \overline{u}_{\alpha} x \overline{u}_{\alpha}^* \otimes \alpha 
		+ \overline{u}_{\beta} x \overline{u}_{\beta}^* \otimes \beta + \overline{u}_{\gamma} x \overline{u}_{\gamma}^* \otimes \gamma,
\label{coprod_KP}
\end{aligned}
\end{align}
for projections $ \epsilon, \alpha, \beta, \gamma$ and $x \in M_2(\mathbb{C})$,
where $\epsilon_{ij}$ are the matrix units in $ M_2(\mathbb{C})$ and 
\begin{align*}
u_{\alpha} =
\begin{pmatrix}
0 & i \\
1 & 0
\end{pmatrix}, \
u_{\beta} =
\begin{pmatrix}
0 & 1 \\
i & 0
\end{pmatrix}, \
u_{\gamma} =
\begin{pmatrix}
-1 & 0 \\
0 & 1
\end{pmatrix}.
\end{align*}

\subsection{Graded twisting of Hopf algebras} 
Let us describe the graded twist construction \cite{zbMATH06679665}.
The algebra $C(SU(2))$ of continuous functions on the compact group $SU(2)$ can be identified with the space 
$\bigoplus_{n \in \mathbb{N}} \overline{H}_{n/2} \otimes H_{n/2}$,
where $\overline{H}_{n/2} \otimes H_{n/2}$ denotes matrix coefficients of the irreducible representation of $SU(2)$ of dimension $n+1$.
Half integers $\{ n/2 \}_{n \in \mathbb{N}}$ can be divided into integers $\{0,1,2,3,\ldots \}$ for $n$ even and 
others $\{1/2,3/2,5/2, \ldots \}$ for $n$ odd. Thus the space above can be decomposed as 
\begin{align*}
(\bigoplus_{n \colon \text{even}} \overline{H}_{n/2} \otimes H_{n/2}) \oplus (\bigoplus_{n \colon \text{odd}} \overline{H}_{n/2} \otimes H_{n/2}).
\end{align*}
The component with even $n$ forms the algebra of continuous functions on $SO(3)$,
so we denote the whole space by $C(SO(3)) \oplus C(SU(2))_{\text{odd}}$.
Let $\{ e_1, e_2 \}$ be an orthonormal basis of $H_{1/2}$.
Unit vectors $\overline{e}_i \otimes e_j$ in $\overline{H}_{1/2} \otimes H_{1/2} \subset C(SU(2))_{\text{odd}}$ are denoted by $u_{ij}$.

Consider an action $\alpha$ of the group $\mathbb{Z}/2\mathbb{Z}$ on the Hopf algebra $C(SU(2))$ defined by
\begin{align}
 \alpha_g \left \lgroup \begin{pmatrix} 
		u_{11} & u_{12} \\
		u_{21} & u_{22} 
		\end{pmatrix} \right \rgroup
  ={}&
  \begin{pmatrix} 
	u_{11} & - u_{12} \\
	- u_{21} & u_{22} 
 \end{pmatrix} \label{action}
  \\={}&
 \begin{pmatrix} 
	i & 0 \\
	0 & -i 
 \end{pmatrix} 
 			\begin{pmatrix} 
			u_{11} & u_{12} \\
			u_{21} & u_{22} 
			\end{pmatrix}		
						\begin{pmatrix} 
						-i & 0 \\
						0 & i 
						\end{pmatrix} \notag
\end{align}
for the generator $g$ of $\mathbb{Z}/2\mathbb{Z}$.

Next take the crossed product $C(SU(2)) \rtimes_{\alpha} \mathbb{Z}/2\mathbb{Z}$ of the Hopf algebra. 
Define the graded twisting of $C(SU(2))$ by $\alpha$ as the subalgebra of crossed product 
\[
C(SU(2))^{t,\alpha} = C(SO(3)) \oplus (C(SU(2))_{\text{odd}} \cdot \lambda_g) \subset C(SU(2)) \rtimes_{\alpha} \mathbb{Z}/2\mathbb{Z}.
\] 
Generators $u_{ij} \lambda_g$ in $C(SU(2))^{t,\alpha}$ are denoted by $u_{ij}'$. 
It follows that the matrix $(u'_{ij})_{i,j=1}^2$ becomes unitary because $(u_{ij})_{i,j=1}^2$ is an unitary matrix.
They satisfy the same relations as the generators of $C(SU_{-1}(2))$. Indeed,
\begin{align*}
(u_{11}')^* ={}& (u_{11} \lambda_g)^* = \lambda_{g^{-1}} u_{11}^* = \lambda_g u_{22} = u_{22} \lambda_g = u_{22}',\\
(u_{21}')^* ={}& (u_{21}\lambda_g)^* = \lambda_{g^{-1}} u_{21}^* = \lambda_g (- u_{21}) = u_{12} \lambda_g = u_{12}',
\end{align*}
so that the matrix $(u'_{ij})_{i,j=1}^2$ can be described as
\[
\begin{pmatrix}
 u_{11}' & u_{12}' \\ 
 u_{21}' & u_{22}'
 \end{pmatrix}
 = \begin{pmatrix}
 u_{11}' & (u_{21}')^* \\ 
 u_{21}' & (u_{11}')^*
 \end{pmatrix}.
 \]
Moreover, images of $u_{ij}'$ by the comlitiplication $\Delta$ on $C(SU(2))^{t,\alpha}$ are given by
\begin{align}
\Delta(u_{ij}') = (\sum_{k} u_{ik} \otimes u_{kj}) \lambda_g \otimes \lambda_g \label{coprod} 
 = \sum_{k} u_{ik}' \otimes u_{kj}'
\end{align}
Thus we obtain an isomorphism of Hopf algebras
\[
C(SU(2))^{t,\alpha} \simeq C(SU_{-1}(2))
\]
by mapping $u_{ij}'$ in $C(SU(2))^{t,\alpha}$ to $u_{ij}^{(-1)}$ in $C(SU_{-1}(2))$.\\

\section{Realization of Kac--Paljutkin Hopf algebra as a quotient}
Besides the formulation of graded twist, \cite{zbMATH06679665} provided a method for describing quantum subgroups 
of a compact quantum groups obtained as the graded twisting of a genuine compact group.
Let us apply their method to our compact quantum group $SU_{-1}(2)$. 

\begin{pro}[{[2, Example 4.11]}]
A quantum subgroup of $SU_{-1}(2)$ with noncommutative function algebra
corresponds to a closed subgroup of $SU(2)$ containing $\{ \pm I_2 \}$, 
being stable under the $\mathbb{Z}/2\mathbb{Z}$-action defined in (\ref{action}) and containing an element 
\begin{align}\label{abcd}
\begin{pmatrix}
 a & b \\ 
 c & d
 \end{pmatrix}
\text{with} \ abcd \neq 0.
\end{align}
\end{pro}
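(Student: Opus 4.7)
The plan is to specialise the general description of Hopf quotients of graded twistings from \cite{zbMATH06679665} to our specific situation $G=SU(2)$ with the action $\alpha$ of (\ref{action}), and then to work out concretely when the resulting twisted quotient algebra is noncommutative.

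First, I would translate everything through the isomorphism $C(SU_{-1}(2))\simeq C(SU(2))^{t,\alpha}$ recalled earlier: a quantum subgroup of $SU_{-1}(2)$ is the same thing as a Hopf quotient of $C(SU(2))^{t,\alpha}$. The twisted algebra combines two pieces of structure on $C(SU(2))$: the $\mathbb{Z}/2\mathbb{Z}$-grading $C(SU(2))=C(SO(3))\oplus C(SU(2))_{\text{odd}}$ by spin parity (equivalently, by translation by the central element $-I_2$), and the automorphism $\alpha$. The Bichon--Neshveyev--Yamashita correspondence then identifies Hopf quotients of $C(SU(2))^{t,\alpha}$ with closed subgroups $H\subset SU(2)$ on which \emph{both} structures restrict, namely $H$ is $\alpha$-stable (so that the twist descends to $C(H)$) and $H\supseteq\{\pm I_2\}$ (so that the parity grading restricts to $C(H)$, which is exactly the condition that $H$ is a union of cosets of the centre). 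For such $H$ the corresponding quantum subgroup has function algebra $C(H)^{t,\alpha}\subset C(H)\rtimes_{\alpha}\mathbb{Z}/2\mathbb{Z}$.

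Next I would analyse when $C(H)^{t,\alpha}$ is noncommutative. Setting $u'_{ij}=u_{ij}\lambda_g$ and using $\lambda_g^2=1$ together with $\alpha_g(u_{ii})=u_{ii}$, $\alpha_g(u_{ij})=-u_{ij}$ for $i\neq j$ and commutativity of $C(H)$, a direct computation yields
\[
[u'_{ij},u'_{kl}] \;=\; \bigl(\eta(k,l)-\eta(i,j)\bigr)\,u_{ij}u_{kl},
\]
where $\eta(p,q)=+1$ if $p=q$ and $-1$ otherwise. Hence $C(H)^{t,\alpha}$ is commutative precisely when $u_{ij}u_{kl}$ vanishes on $H$ whenever $\eta(i,j)\neq\eta(k,l)$, and a short case analysis translates this into every $h\in H$ being either diagonal or anti-diagonal. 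Contrapositively, $C(H)^{t,\alpha}$ is noncommutative exactly when $H$ contains some $\begin{pmatrix}a&b\\c&d\end{pmatrix}$ with $abcd\neq 0$, which is the stated condition.

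The main obstacle is not the commutator calculation, which is routine, but the careful verification that the Bichon--Neshveyev--Yamashita bijection specialises to exactly the correspondence above; concretely, that every Hopf quotient of $C(SU(2))^{t,\alpha}$ intersected with the even degree part $C(SO(3))$ produces a closed subgroup $K\subset SO(3)$ whose preimage $H\subset SU(2)$ contains $\{\pm I_2\}$, is automatically $\alpha$-stable, and that the odd degree part of the quotient is then forced to coincide with $C(H)_{\text{odd}}\cdot\lambda_g$.
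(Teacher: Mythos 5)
The paper does not actually prove this proposition: it is imported verbatim as \cite{zbMATH06679665}, Example 4.11, so there is no internal proof to compare yours against. Your proposal is sound and in fact records more than the paper does. The computational half is correct and essentially complete: with $u'_{ij}=u_{ij}\lambda_g$, $\lambda_g^2=1$ and $\alpha_g(u_{kl})=\eta(k,l)u_{kl}$ one gets $[u'_{ij},u'_{kl}]=(\eta(k,l)-\eta(i,j))u_{ij}u_{kl}$; since even (resp.\ odd) length words in the $u_{ij}$ span $C(H)_{\mathrm{even}}$ (resp.\ $C(H)_{\mathrm{odd}}$), the $u'_{ij}$ generate all of $C(H)^{t,\alpha}$, so commutativity of the whole twist really is equivalent to the vanishing on $H$ of the mixed products $u_{ii}u_{kl}$ with $k\neq l$ --- it would be worth stating this generation fact explicitly, as otherwise one might worry about commutators between $C(H)_{\mathrm{even}}$ and $C(H)_{\mathrm{odd}}\lambda_g$ that your generator computation does not directly address. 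The unitarity relations $|h_{11}|=|h_{22}|$ and $|h_{12}|=|h_{21}|$ for $h\in SU(2)$ then convert the vanishing condition into ``every element of $H$ is diagonal or anti-diagonal,'' whose negation is exactly the existence of an element with $abcd\neq 0$. The one genuinely nontrivial input --- that Hopf $*$-quotients of $C(SU(2))^{t,\alpha}$ are precisely the $C(H)^{t,\alpha}$ for $\alpha$-stable closed subgroups $H\supseteq\{\pm I_2\}$ --- is the content of \cite{zbMATH06679665} and is cited rather than proved both by you and by the paper; your remark that $H\supseteq\{\pm I_2\}$ is forced by requiring the ideal of functions vanishing on $H$ to be graded is the correct mechanism, and spelling out that verification is exactly what would upgrade your sketch to a self-contained proof.
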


Take a subgroup $\tilde{V}$ of $SU(2)$ generated by
\[
s_1 = \frac{1}{\sqrt{2}} \begin{pmatrix}
 					i & i \\ 
 					i & -i
				 \end{pmatrix},\
s_2 = \frac{1}{\sqrt{2}} \begin{pmatrix}
 					-i & i \\ 
 					i & i
				 \end{pmatrix},\
s_3 = \begin{pmatrix}
 0 & -1 \\ 
 1 & 0
 \end{pmatrix}.
\]
They satisfy relations $s_i^2 = - I_2$ for $i = 1,2,3$ and $s_1 s_2 = s_3 = -s_2 s_1$.
It is the group of order eight with elements $\pm s_i$ for $i = 1,2,3$ and $\pm I_2$.
This subgroup $\tilde{V}$ is related to the Klein 4-group $K_4 $ via an isomorphism 
$\tilde{V} / \{ \pm I_2 \} \simeq K_4$.
This subgroup satisfies the conditions mentioned in the above proposition. Namely,
$s_i^2 = - I_2$ for $i = 1,2,3$ so $\tilde{V}$ has the elements $ \pm I_2$.
Since $\alpha_g$ transforms the elements 
\begin{align*}
s_1 \mapsto - s_2, \ s_2 \mapsto -s_1, s_3 \mapsto - s_3, I_2 \mapsto I_2,
\end{align*}
$\tilde{V}$ is stable under the $\mathbb{Z}/2\mathbb{Z}$-action. 
The element $s_1$ in $\tilde{V}$ gives an example of an element of the form in (\ref{abcd}).

Consider the graded twisting $C(\tilde{V})^{t,\alpha} = C(\tilde{V})_{\text{even}} \oplus (C(\tilde{V})_{\text{odd}} \cdot \lambda_g)$ of 
the Hopf algebra $C(\tilde{V})$ of continuous functions on $\tilde{V}$. 

We are now ready to state our main result.

\begin{thm}
There exists a surjective Hopf $*$-homomorphism from $C(SU_{-1}(2))$ to $C(G_{\kp})$.
It can be constructed by a composition of 
the Hopf $*$-isomorphism $C(SU_{-1}(2)) \to C(SU(2))^{t,\alpha}$,
a surjective Hopf $*$-homomorphism $C(SU(2))^{t,\alpha} \to C(\tilde{V})^{t,\alpha}$,
and the Hopf $*$-isomorphism $C(\tilde{V})^{t,\alpha} \to C(G_{\kp})$ defined by 
\begin{align*}
(\epsilon, \alpha', \beta', \gamma') & \mapsto (\epsilon, \gamma, \alpha, \beta), \\
M_2(\mathbb{C}) \ni x & \mapsto vxv^* \quad
\left( v = \begin{pmatrix}
-1 & 0 \\
0 & i
\end{pmatrix} \right).
\end{align*}
\end{thm}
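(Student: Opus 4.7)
The first Hopf $*$-isomorphism $C(SU_{-1}(2))\simeq C(SU(2))^{t,\alpha}$ is the graded twist identification recalled in Section~2.3, so I take it as given. For the second map I would invoke functoriality of the graded twist construction: the restriction $\pi\colon C(SU(2))\to C(\tilde V)$ is a surjective Hopf $*$-homomorphism because $\tilde V$ is a closed subgroup of $SU(2)$, and it is $\mathbb{Z}/2\mathbb{Z}$-equivariant because $\tilde V$ is $\alpha$-stable. It therefore extends equivariantly to the crossed products, preserves the $\{\pm I_2\}$-parity grading, and restricts to a surjective Hopf $*$-homomorphism $C(SU(2))^{t,\alpha}\to C(\tilde V)^{t,\alpha}$; compatibility of the comultiplications is automatic since on the generators $u'_{ij}$ both sides use the matrix-coefficient formula (\ref{coprod}).

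The substantive content is the third map. I would first identify $C(\tilde V)^{t,\alpha}$ with $\mathbb{C}^4\oplus M_2(\mathbb{C})$ via Artin--Wedderburn. The $\alpha_g$-action on the four characters of $K_4\simeq \tilde V/\{\pm I_2\}$ fixes two of them and swaps the remaining pair, which produces three central idempotents from the even part and cuts $C(\tilde V)^{t,\alpha}$ into blocks of dimensions $2,2,4$. Each two-dimensional block is commutative and splits further into two minimal projections via a self-adjoint odd-twisted element that squares to the corresponding central idempotent, and together these yield the four projections $\epsilon,\alpha',\beta',\gamma'$ of the theorem. The four-dimensional block, generated by the images of the fundamental matrix coefficients $u'_{ij}=u_{ij}\lambda_g$, is isomorphic to $M_2(\mathbb{C})$ but with its multiplication twisted by conjugation by $D=\mathrm{diag}(i,-i)$ coming from the action $\alpha_g$; the prescribed conjugation $x\mapsto vxv^*$ with $v=\mathrm{diag}(-1,i)$ absorbs this twist and identifies the block with a standard $M_2(\mathbb{C})$ matching the $M_2$-summand of $C(G_{\kp})$. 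Assigning the minimal projections and the $M_2(\mathbb{C})$-generators as in the theorem and checking multiplicativity block by block then yields a $*$-algebra isomorphism, and bijectivity follows from matching dimensions.

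The main obstacle is the coproduct compatibility on the $M_2(\mathbb{C})$-block. On the $C(\tilde V)^{t,\alpha}$-side the coproduct has the simple matrix-coefficient form $\Delta(u'_{ij})=\sum_k u'_{ik}\otimes u'_{kj}$ inherited from (\ref{coprod}), while on the $C(G_{\kp})$-side the coproduct (\ref{coprod_KP}) involves the intricate family of conjugations by $u_\alpha,u_\beta,u_\gamma$. As flagged in the introduction, reconciling these two presentations amounts to comparing two projective representations of $K_4$: the defining one encoded in the central extension $1\to\{\pm I_2\}\to\tilde V\to K_4\to 1$ via the matrices $s_1,s_2,s_3$, and the Tambara--Yamagami one encoded in $u_\alpha,u_\beta,u_\gamma$. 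These projective representations must be cohomologous, and $v$ is precisely the intertwiner; verifying that this single $v$ simultaneously converts all of the coproduct data of (\ref{coprod_KP}) into the form $\sum_k u'_{ik}\otimes u'_{kj}$ on the twisted side is the core computation and the main difficulty of the proof.
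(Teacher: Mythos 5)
Your outline follows the same route as the paper: take the graded-twist isomorphism $C(SU_{-1}(2))\simeq C(SU(2))^{t,\alpha}$ as given, obtain the surjection onto $C(\tilde V)^{t,\alpha}$ from equivariance of the restriction to the $\alpha$-stable subgroup $\tilde V$ (this is exactly the content of the cited result of Bichon--Neshveyev--Yamashita), and then build an explicit Hopf $*$-isomorphism $C(\tilde V)^{t,\alpha}\to C(G_{\kp})$. Your block analysis of $C(\tilde V)^{t,\alpha}$ is also correct and matches the paper's generator assignment: the $\alpha_g$-action on $K_4\simeq\tilde V/\{\pm I_2\}$ fixes the classes of $I_2$ and $s_3$ and swaps those of $s_1$ and $s_2$, giving central idempotents that cut the algebra into $\mathbb{C}^2\oplus\mathbb{C}^2\oplus M_2(\mathbb{C})$, with the two commutative blocks splitting into $\epsilon,\alpha',\beta',\gamma'$.

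The gap is that you stop exactly where the paper's proof actually lives. Having fixed the $*$-algebra identification, the entire content of the theorem is the verification that the permutation $(\epsilon,\alpha',\beta',\gamma')\mapsto(\epsilon,\gamma,\alpha,\beta)$ together with $x\mapsto vxv^*$ intertwines $\Delta_{\mathrm{gr}}$ with $\Delta_{G_{\kp}}$, and you declare this ``the core computation and the main difficulty'' without doing it. The paper carries it out: it computes $\Delta_{\mathrm{gr}}$ on all generators under the identification, which produces explicit unitaries $w_{\alpha'}=\mathrm{diag}(-1,1)$, $w_{\beta'}=\left(\begin{smallmatrix}0&1\\ i&0\end{smallmatrix}\right)$, $w_{\gamma'}=\left(\begin{smallmatrix}0&-i\\ -1&0\end{smallmatrix}\right)$ playing the role of $u_\alpha,u_\beta,u_\gamma$ in \eqref{coprod_KP}, and then checks the three identities $vw_{\alpha'}v^*=u_\gamma$, $vw_{\beta'}v^*=u_\alpha$, $vw_{\gamma'}v^*=u_\beta$, from which the matching of the coproducts on both the $\mathbb{C}^4$ part and the $M_2(\mathbb{C})$ part follows term by term. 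Without exhibiting the $w$'s and these identities, the existence of a single unitary $v$ doing the job is asserted rather than proved. A secondary inaccuracy: $v$ does not ``absorb a multiplication twist'' on the four-dimensional block --- $x\mapsto vxv^*$ is an inner automorphism of $M_2(\mathbb{C})$ and changes nothing at the algebra level; its only role is to reconcile the two coproducts, i.e.\ the two projective representations of $K_4$ you correctly identify.
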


There is a surjective homomorphism $C(SU(2))^{t, \alpha} \to C(\tilde{V})^{t,\alpha}$, 
it represents a quantum subgroup of $SU_{-1}(2)$.

\begin{pro}
The Hopf $*$-algebra $C(\tilde{V})^{t,\alpha}$ is  noncommutative and noncocommutative.
\end{pro}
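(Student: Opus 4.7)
The plan is to argue directly from the description
$C(\tilde{V})^{t,\alpha} = C(\tilde{V})_{\text{even}} \oplus (C(\tilde{V})_{\text{odd}} \cdot \lambda_g)$
as a subalgebra of the crossed product $C(\tilde{V}) \rtimes_\alpha \mathbb{Z}/2\mathbb{Z}$, working with the distinguished generators $u'_{ij} = u_{ij}\lambda_g$ obtained by restricting the matrix coefficients of the fundamental representation of $SU(2)$ to $\tilde{V}$. Everything reduces to the crossed-product relation $\lambda_g f = \alpha_g(f)\lambda_g$ combined with the sign flip $\alpha_g(u_{12}) = -u_{12}$, $\alpha_g(u_{21}) = -u_{21}$ prescribed in (\ref{action}).

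For noncommutativity, I would multiply two off-diagonal generators in opposite orders. Using $\lambda_g^2 = 1$ and commutativity of $C(\tilde{V})$, one finds
\[
u'_{11} u'_{12} = u_{11}\alpha_g(u_{12}) = -\, u_{11} u_{12}, \qquad u'_{12} u'_{11} = u_{12}\alpha_g(u_{11}) = u_{11} u_{12},
\]
both landing in $C(\tilde{V})_{\text{even}}$. To conclude $u'_{11} u'_{12} \neq u'_{12} u'_{11}$ it then suffices to exhibit one point of $\tilde{V}$ where $u_{11} u_{12}$ does not vanish; the generator $s_1$ does the job, since all four of its matrix entries equal $\pm i/\sqrt{2}$ and hence $u_{11}(s_1) u_{12}(s_1) = -\tfrac{1}{2}$.

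For noncocommutativity, the graded-twist coproduct formula (\ref{coprod}) gives
\[
\Delta(u'_{11}) = u'_{11}\otimes u'_{11} + u'_{12}\otimes u'_{21},
\]
whereas the flip sends this to $u'_{11}\otimes u'_{11} + u'_{21}\otimes u'_{12}$. Agreement of these two tensors would force $u'_{12}$ and $u'_{21}$ to be linearly dependent, equivalently $u_{12}$ and $u_{21}$ to be proportional in $C(\tilde{V})$. This is blocked by comparing the values at $s_1$ (where $u_{12}(s_1) = u_{21}(s_1) = i/\sqrt{2}$) with the values at $s_3$ (where $u_{12}(s_3) = -1$ while $u_{21}(s_3) = 1$): the ratio $u_{12}/u_{21}$ fails to be constant on $\tilde{V}$, so $\Delta$ is not cocommutative.

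I expect no substantive obstacle: both verifications reduce to evaluating two matrix coefficients at a couple of points of the finite group $\tilde{V}$, and the generators $s_1, s_3$ already chosen in the construction suffice. As a cross-check, the Hopf $*$-isomorphism with $C(G_{\kp})$ furnished by the preceding theorem yields the same conclusion at once: the Kac--Paljutkin algebra contains an $M_2(\mathbb{C})$ block (hence is noncommutative), and the asymmetric terms $i\epsilon_{12}\otimes\epsilon_{21} - i\epsilon_{21}\otimes\epsilon_{12}$ appearing in $\Delta(\alpha)$ in (\ref{coprod_KP}) rule out cocommutativity.
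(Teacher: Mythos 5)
Your argument is correct, and it rests on the same basic mechanism as the paper's proof—direct computation inside the crossed product $C(\tilde V)\rtimes_\alpha \mathbb{Z}/2\mathbb{Z}$, where the sign flips $\alpha_g(u_{12})=-u_{12}$, $\alpha_g(u_{21})=-u_{21}$ from (\ref{action}) are what create both failures of (co)commutativity—but with different test elements. The paper works in the basis of delta functions: it multiplies $\delta_{s_1}+\delta_{-s_1}$ against $(\delta_{s_2}-\delta_{-s_2})\lambda_g$ in the two orders (getting $0$ versus a nonzero element), and compares $\Delta$ with $\Delta^{\mathrm{op}}$ on $(\delta_{s_3}-\delta_{-s_3})\lambda_g$. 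You instead use the generators $u'_{ij}=u_{ij}\lambda_g$, reducing noncommutativity to the nonvanishing of the single function $u_{11}u_{12}$ on $\tilde V$ and noncocommutativity to the non-proportionality of $u_{12}$ and $u_{21}$, both checked by evaluating at $s_1$ and $s_3$; all of these evaluations are correct. Your route is a bit more economical and makes the reason transparent: the identity $u'_{11}u'_{12}=-u'_{12}u'_{11}$ is generic for any graded twist of a subgroup of $SU(2)$ and only needs one point with $ab\neq 0$, which is exactly the role of condition (\ref{abcd}); the paper's delta-function computation, on the other hand, produces the explicit idempotents that are reused in the proof of the main theorem to identify $C(\tilde V)^{t,\alpha}$ with $C(G_{\kp})$. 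Two minor points to make explicit: in the cocommutativity step, the implication from $u'_{12}\otimes u'_{21}=u'_{21}\otimes u'_{12}$ to linear dependence requires knowing the elements are nonzero (your evaluation at $s_1$ already supplies this), and your closing cross-check via the isomorphism with $C(G_{\kp})$ should be flagged as logically downstream, since that isomorphism is only established in the theorem whose proof follows this proposition.
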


\begin{proof}
Indeed, if we take a product of an element $\delta_{s_1} + \delta_{-s_1}$ in $C(\tilde{V})_{\text{even}}$ and an element 
$(\delta_{s_2} - \delta_{-s_2}) \lambda_g$ in $C(\tilde{V})_{\text{odd}} \cdot \lambda_g$ in this order, then we have
\[
(\delta_{s_1} + \delta_{-s_1})(\delta_{s_2} - \delta_{-s_2}) \lambda_g = 0.
\] 
On the other hand, 
\[
(\delta_{s_2} - \delta_{-s_2}) \lambda_g (\delta_{s_1} + \delta_{-s_1}) 
= (\delta_{s_2} - \delta_{-s_2}) (\delta_{-s_2} + \delta_{s_2}) \lambda_g
=  (\delta_{s_2} - \delta_{-s_2}) \lambda_g,
\]
which shows noncommutativity of $C(\tilde{V})^{t,\alpha}$.
Furthermore, the coproduct on $C(\tilde{V})^{t,\alpha}$ is induced by 
\[
\Delta(\delta_h) = \sum_{h = k_1 k_2} \delta_{k_1} \otimes \delta_{k_2}
\]
for $h \in \tilde{V}$ and (\ref{coprod}).
Using it we can see that the comultiplication $\Delta$ on $C(\tilde{V})^{t,\alpha}$ is noncocommutative by observing that 
$\Delta( (\delta_{s_3} - \delta_{-s_3}) \lambda_g ) \neq \Delta^{\text{op}}( (\delta_{s_3} - \delta_{-s_3}) \lambda_g )$
for an element $(\delta_{s_3} - \delta_{-s_3}) \lambda_g$ in $C(\tilde{V})^{t,\alpha}$. 
By direct computation we get
\begin{align*}
\Delta( (\delta_{s_3} - \delta_{-s_3}) \lambda_g )  ={}& (\Delta(\delta_{s_3} ) - \Delta(\delta_{-s_3}) ) (\lambda_g \otimes \lambda_g)
\\={}& \{ (\delta_{s_1} -\delta_{-s_1}) \otimes (\delta_{s_2} - \delta_{-s_2}) -(\delta_{s_2} - \delta_{-s_2}) \otimes (\delta_{s_1} -\delta_{-s_1}) 
\\&{}+ (\delta_{s_3} -\delta_{-s_3}) \otimes (\delta_{I_2} -\delta_{-I_2}) + (\delta_{I_2} -\delta_{-I_2}) \otimes (\delta_{s_1} -\delta_{-s_1}) \}
(\lambda_g \otimes \lambda_g) ,
\end{align*}
and 
\begin{align*}
\Delta^{\text{op}}( (\delta_{s_3} - \delta_{-s_3}) \lambda_g )  ={}&
 \{ (\delta_{s_2} - \delta_{-s_2}) \otimes (\delta_{s_1} -\delta_{-s_1}) - (\delta_{s_1} -\delta_{-s_1}) \otimes (\delta_{s_2} - \delta_{-s_2})
\\&{}+ (\delta_{s_3} -\delta_{-s_3}) \otimes (\delta_{I_2} -\delta_{-I_2}) + (\delta_{I_2} -\delta_{-I_2}) \otimes (\delta_{s_1} -\delta_{-s_1})\}
(\lambda_g \otimes \lambda_g) .
\end{align*}
This concludes the proof.
\end{proof}

\begin{proof}[Proof of Theorem 6]
The only thing we need to describe is a concrete isomorphism of Hopf algebras 
$(C(\tilde{V})^{\text{t},\alpha}, \Delta_{\text{gr}}) \to (C(G_{\kp}), \Delta_{G_{\kp}})$. 
We set generators in $C(\tilde{V})^{t,\alpha} = C(\tilde{V})_{\text{even}} \otimes ( C(\tilde{V})_{\text{odd}} \cdot \lambda_g)$ 
regarded as elements in 
\[
\mathbb{C} \cdot {\epsilon} \oplus \mathbb{C} \cdot {\alpha'} \oplus 
\mathbb{C} \cdot {\beta'} \oplus \mathbb{C} \cdot {\gamma'} \oplus M_2(\mathbb{C})
\]
for projections $\epsilon, \alpha', \beta'$, and $\gamma'$ such that the triplet $(\alpha', \beta', \gamma')$ is obtained from 
permutation of the triplet $(\alpha, \beta, \gamma)$ by the following mapping
\begin{align*}
\delta_{s_1} + \delta_{-s_1} \mapsto
\begin{pmatrix}
 1 & 0 \\ 
 0 & 0
\end{pmatrix}, & \quad
\delta_{s_2} + \delta_{-s_2} \mapsto
\begin{pmatrix}
 0 & 0 \\ 
 0 & 1
\end{pmatrix}, \\
\delta_{s_3} + \delta_{-s_3} \mapsto \beta' + \gamma', & \quad
\delta_{I_2} + \delta_{-I_2} \mapsto \epsilon' + \alpha',\\
(\delta_{s_1} - \delta_{-s_1}) \lambda_g \mapsto
\begin{pmatrix}
 0 & -1 \\ 
 0 & 0
\end{pmatrix}, & \quad
(\delta_{s_2} - \delta_{-s_2}) \lambda_g \mapsto
\begin{pmatrix}
 0 & 0 \\ 
 1 & 0
\end{pmatrix}, \\
(\delta_{s_3} - \delta_{-s_3}) \lambda_g \mapsto i (\beta' - \gamma'), & \quad
(\delta_{I_2} - \delta_{-I_2} ) \lambda_g \mapsto \epsilon' - \alpha'.
\end{align*}
Applying this mapping, we can see that formulas for images of elements in $C(\tilde{V})^{t,\alpha}$ 
by the comultiplication $\Delta_{\text{gr}}$ is given by
\begin{align}
\begin{aligned}
\Delta_{\text{gr}}(\epsilon) ={}& \epsilon \otimes \epsilon + \alpha' \otimes \alpha' + \beta' \otimes \beta' + \gamma' \otimes \gamma'  \\
&+ \frac{1}{2} (\epsilon_{11} \otimes \epsilon_{11} - \epsilon_{12} \otimes \epsilon_{12} 
	- \epsilon_{21} \otimes \epsilon_{21} + \epsilon_{22} \otimes \epsilon_{22} ),   \\
\Delta_{\text{gr}}(\alpha') ={}& \epsilon \otimes \alpha'  + \alpha' \otimes \epsilon + \beta' \otimes \gamma' + \gamma' \otimes \beta'  \\
&+ \frac{1}{2} (\epsilon_{11} \otimes \epsilon_{11} + \epsilon_{12} \otimes \epsilon_{12} 
	+ \epsilon_{21} \otimes \epsilon_{21} + \epsilon_{22} \otimes \epsilon_{22} ), \\
\Delta_{\text{gr}}(\beta') ={}& \epsilon \otimes \beta'  + \beta' \otimes \epsilon + \alpha' \otimes \gamma' + \gamma' \otimes \alpha'  \\
&+ \frac{1}{2} (\epsilon_{11} \otimes \epsilon_{22} + i \epsilon_{12} \otimes \epsilon_{21} 
	-i \epsilon_{21} \otimes \epsilon_{12} + \epsilon_{22} \otimes \epsilon_{11} ), \\
\Delta_{\text{gr}}(\gamma') ={}& \epsilon \otimes \gamma'  + \gamma' \otimes \epsilon + \alpha' \otimes \beta' + \beta' \otimes \alpha'  \\
&+ \frac{1}{2} (\epsilon_{11} \otimes \epsilon_{22} - i \epsilon_{12} \otimes \epsilon_{21} 
	+ i \epsilon_{21} \otimes \epsilon_{12} + \epsilon_{22} \otimes \epsilon_{11} ), \\	
  \Delta_{\text{gr}}(x)
  ={}&
 \epsilon \otimes x + \alpha' \otimes w_{\alpha'} x w_{\alpha'}^* 
 + \beta' \otimes w_{\beta'} x w_{\beta'}^* + \gamma' \otimes w_{\gamma'} x w_{\gamma'}^* 
  \\&+
 x \otimes \epsilon + \overline{w}_{\alpha'} x \overline{w}_{\alpha'}^* \otimes \alpha'
 + \overline{w}_{\beta'} x \overline{w}_{\beta'}^* \otimes \beta' + \overline{w}_{\gamma'} x \overline{w}_{\gamma'}^* \otimes \gamma',
 \label{coprod_gr}
\end{aligned}
\end{align}
for projections $\epsilon, \alpha', \beta', \gamma'$ and $x \in M_2(\mathbb{C})$, where
$\epsilon_{ij}$ are the matrix units and 
\begin{align*}
w_{\alpha'} =
\begin{pmatrix}
 -1 & 0 \\ 
 0 & 1
\end{pmatrix}, \quad
w_{\beta'} =
\begin{pmatrix}
 0 & 1 \\ 
 i & 0
\end{pmatrix}, \quad
w_{\gamma'} =
\begin{pmatrix}
 0 & -i \\ 
 -1 & 0
\end{pmatrix}.
\end{align*} 
We can observe that these unitary matrices $w_{\alpha'}$, $w_{\beta'}$ and $w_{\gamma'}$ are transformed to unitary matrices
$u_{\gamma}$, $u_{\alpha}$ and $u_{\beta}$ in the formula of $\Delta_{G_{\kp}}$ respectively, 
by taking adjoint by a unitary matrix 
\begin{align} \label{unitaries}
v = \begin{pmatrix}
-1 & 0 \\
0 & i
 \end{pmatrix}, \
v w_{\alpha'} v^* = u_\gamma,
v w_{\beta'} v^* = u_\alpha, 
v w_{\gamma'} v^* = u_\beta.
\end{align}
Moreover, $\Delta_{\text{gr}}(\epsilon)$, $\Delta_{\text{gr}}(\alpha')$, $\Delta_{\text{gr}}(\beta')$ and $\Delta_{\text{gr}}(\gamma')$
coincide with $\Delta_{G_{\kp}}(\epsilon)$, $\Delta_{G_{\kp}}(\gamma)$, $\Delta_{G_{\kp}}(\alpha)$ 
and $\Delta_{G_{\kp}}(\beta)$ respectively, by 
$\Phi$
defined by
\begin{align*}
(\epsilon, \alpha', \beta', \gamma') \mapsto (\epsilon, \gamma, \alpha, \beta), \
M_2(\mathbb{C}) \ni x \mapsto vxv^*.
\end{align*}
Then (\ref{unitaries}) implies that $\Phi$ intertwines (\ref{coprod_gr}) to (\ref{coprod_KP}). 
For instance, 
$\alpha' \otimes w_{\alpha'} x w_{\alpha'}^*$ in (\ref{coprod_gr})
is transformed to $\gamma \otimes v w_{\alpha'} v^*  (v x v^*) v w_{\alpha'}^* v^*
= \gamma \otimes u_{\gamma} \text{Ad}_v (x) u_{\gamma}^*$ in (\ref{coprod_KP}).
Hence we obtain the isomorphism $(C(\tilde{V})^{\text{t},\alpha}, \Delta_{\text{gr}}) \to (C(G_{\kp}), \Delta_{G_{\kp}})$.
\end{proof}

\section{$\rep SU_{-1}(2)$-module homomorphisms}
\subsection{Further preliminaries}

In the following we assume that $\mathcal{C}$ is a C$^*$-tensor category and $\mathbbm{1}$ denotes the unit object in $\mathcal{C}$.
For details, we refer to \cite{MR3204665}.

\begin{dfn}
A representation of a compact quantum group $G$ on a finite dimensional vector space $H_U$ is 
an invertible element $U \in B(H_U) \otimes C(G)$ such that 
\[
(\iota \otimes \Delta)(U) = U_{12} U_{13} \ \text{in} \ B(H_U) \otimes C(G) \otimes C(G).
\]
\end{dfn}
The representation $(U, H_U)$ is called unitary representation if $H_U$ is a Hilbert space and $U$ is unitary.
The unitaries $U = (u_{ij})_{i,j}$ in Definition \ref{SUq2} and Definition \ref{AoF} define unitary representations of each compact quantum group. 
They are called the fundamental representations of the corresponding quantum groups.

The 1-dimensional corepresentations of $(C(G_{\kp}), \Delta_{G_{\kp}})$ are following.
\begin{align}
\begin{aligned}
u_1 = \epsilon + \alpha + \beta + \gamma + 
 \begin{pmatrix}
 1 & 0 \\ 
 0 &  1
 \end{pmatrix}, \quad & 
 u_2 = \epsilon - \alpha - \beta + \gamma + 
 \begin{pmatrix}
 1 & 0 \\ 
 0 &  -1
 \end{pmatrix}\\ 
 u_3 = \epsilon + \alpha + \beta + \gamma + 
 \begin{pmatrix}
 -1 & 0 \\ 
 0 & -1
 \end{pmatrix}, \quad &
u_4 = \epsilon - \alpha - \beta + \gamma + 
 \begin{pmatrix}
 -1 & 0 \\ 
 0 & 1
 \end{pmatrix}. \label{1dim}
\end{aligned}
\end{align} 

The oriented graph with weights corresponding to the representation category $\rep (G_{\kp})$ is in Figure \ref{fig}.
Each vertex corresponds to an irreducible object in $\rep (G_{\kp})$ with labeling corresponding to the convention of Section 2.2. 
Total weights on the oriented edges starting from one vertex is equal to 2.
See \cite{MR3420332} for the interpretation of the weights of this graph.
\begin{figure}[hb]
\centerline{
\xymatrix@=3em{
&&\bullet^{a} \ar @/^/[d]^{^2} &&
\\ & \ar @/^/[r]^{_{2}} \bullet_{e} & \ar @/^/[l]^{^{1/2}} \ar @/^/[r]^{_{1/2}}  \bullet_{\rho}
 \ar @/^/[d]^{^{1/2}} \ar @/^/[u]^{^{1/2}} & \ar @/^/[l]^{^{2}}  \bullet_{b}&
\\ && \ar @/^/[u]^{^{2}} \bullet_{c} &&
}
}
\caption{$D_4^{(1)}$} 
\label{fig}
\end{figure}

\begin{dfn}
Assume $(U, H_U)$ and $(V, H_V)$ are finite dimensional representations of a compact quantum group $G$.
Then an operator $T \colon H_U \to H_V$ is an intertwiner from $U$ to $V$ if
\[
(T \otimes 1) U = V (T \otimes 1).
\]
\end{dfn}
The space of intertwiners from $U$ to $V$ is denoted by $\mor (U,V)$.
A representation $(U, H_U)$ is irreducible if $\mor (U,U) = \mathbb{C}$.

Let $U \in M_2(C(G_{\kp}))$ be the fundamental representation $u_i$ of $G_{\kp}$ (\ref{1dim}).
Its tensor product $U \otimes U$ decomposes into $\sum_{i=1}^4 P_i \otimes u_i$ wth
mutually orthogonal matrices
\begin{align*}
P_1 = \frac{1}{2}
 \begin{pmatrix}
 0 & 0 & 0 & 0 \\ 
 0 & 1 & 1 & 0 \\
 0 & 1 & 1 & 0 \\
 0 & 0 & 0 & 0 \\ 
 \end{pmatrix}, \quad &
P_2 = \frac{1}{4}
 \begin{pmatrix}
 1 & 1 & -1 & 1 \\
 1 & 1 & -1 & 1 \\
 -1 & -1 & 1 & -1 \\
 1 & 1 & -1 & 1 \\
 \end{pmatrix},\\
P_3 = \frac{1}{2}
 \begin{pmatrix}
 1 & 0 & 0 & -1 \\ 
 0 & 0 & 0 & 0 \\
 0 & 0 & 0 & 0 \\
 -1 & 0 & 0 & 1 \\ 
 \end{pmatrix},  \quad & 
 P_4 = \frac{1}{4}
 \begin{pmatrix}
 1 & -1 & 1 & 1 \\
 -1 & 1 & -1 & -1 \\
 1 & -1 & 1 & 1 \\
1 & -1 & 1 & 1 \\
 \end{pmatrix}.
\end{align*} 

\begin{dfn}[e.g., \cite{MR3121622}]
Let $\mathcal{D}$ be a C$^*$-category. Then $(\mathcal{D}, M, \phi, e)$ is a left $\mathcal{C}$-module C$^*$-category
if $M$ is a bilinear $*$-functor $\mathcal{C} \times \mathcal{D} \to \mathcal{D}$ with natural transformations 
$\phi \colon M((- \otimes -), -) \to M(-, M(-, -))$ and $e \colon M(\mathbbm{1}, -) \to \text{id}$
satisfying certain coherence condition. 
We often abbreviate this left $\mathcal{C}$-module C$^*$-category as $\mathcal{D}$.

When we write $U \otimes X$ for $M(U, X)$, then the condition is described as the commutative diagrams below.
\begin{figure}[h]
\begin{minipage}[b]{0.45\textwidth}
\centerline{
\xymatrix@=1em{ 
(U \otimes V \otimes W ) \otimes X \ar[rrr]^>>>>>>>>{\phi_{U, V \otimes W, X}}  \ar[dd]_{\phi_{U \otimes V, W, X}}& 
&&U \otimes ((V \otimes W ) \otimes X) \ar[dd]^{\text{id}_U \otimes \phi_{V, W, X}}
\\ &&
\\ (U \otimes V) \otimes (W  \otimes X) \ar[rrr]^>>>>>>>>{\phi_{U \otimes V, W, X}} &&& U \otimes (V \otimes (W  \otimes X))
}}
\end{minipage}
\hspace{0.1cm}
\begin{minipage}[b]{0.45\textwidth}
\centerline{
\xymatrix@=1em{ 
& U \otimes (\mathbbm{1} \otimes X) \ar[dr]^>>{\text{id}_U \otimes e_X} &
\\ U \otimes X \ar[ur]^<<<{\phi_{U, \mathbbm{1}, X}} \ar[rr]^{\text{id}_{U \otimes X}} \ar[dr]_<<<<{\phi_{\mathbbm{1}, U, X}} && U \otimes X
\\ & \mathbbm{1} \otimes (U \otimes X) \ar[ur]_<<<<<<{e_{U \otimes X}} &
}}
\end{minipage}
\end{figure}
\end{dfn}

\begin{ex}
Let $G$ be a compact (quantum) group, and $H$ be a closed (quantum) subgroup of $G$.
Then $\rep H$ is a $\rep G$-module C$^*$-category. 
For $\pi \in \rep G$ and $\theta \in H$, $\pi \otimes \theta$ is defined by $\pi|_{H} \otimes \theta$.
The restriction functor $\rep G \to \rep H$ induces this module category.
We are particularly interested in the case of $G= SU_{-1}(2)$ and $H = G_{\kp}$.
\end{ex}

\begin{dfn}
Let $\mathcal{D}, \mathcal{D}'$ be module categories over a fixed C$^*$-tensor category $\mathcal{C}$.
Then $(G, \psi)$ is a $\mathcal{C}$-module homomorphism from $\mathcal{D} \to \mathcal{D}'$ if 
$G$ is a functor from $\mathcal{D} \to \mathcal{D}'$ and 
$\psi$ is a natural unitary equivalence $G(- \otimes -) \to - \otimes G -$ satisfying the commutative diagrams below.
\begin{figure}[h]
\begin{minipage}[b]{0.45\textwidth}
\centerline{
\xymatrix@=1em{ 
G(\mathbbm{1} \otimes X) \ar[rr]^{\psi_{\mathbbm{1}, X}}  \ar[dd]_{G(e)} &&
\mathbbm{1} \otimes GX \ar[lldd]^{e}
\\ &&
\\ GX &&
}}
\end{minipage}
\hspace{0.1cm}
\begin{minipage}[b]{0.45\textwidth}
\centerline{
\xymatrix@=1em{ 
& U \otimes G(V \otimes X) \ar[dr]^>>{\text{id}_U \otimes \psi_{V, X}} &
\\ G(U \otimes (V \otimes X)) \ar[d]_{G(\phi_{U, V, X})} \ar[ur]^<<<{\psi_{U, V \otimes X}} && U \otimes (V \otimes GX) \ar[d]^{\phi_{U, V, GX}}
\\ G((U \otimes V) \otimes X) \ar[rr]_{\psi_{U \otimes V, X}} && (U \otimes V) \otimes GX
}}
\end{minipage}
\end{figure}
\end{dfn}

A $\rep G$-module homomorphism for a compact quantum group $G$ corresponds to the Hopf homomorphism
which define the action of $G$.

\subsection{Concrete computation}
Unitary maps $\psi$ consisting a $\rep SU_{-1}(2)$-module homomorphism $(G, \psi)$ together with 
a functor $G \colon \rep G_{\kp} \to \hilb_f$ can be given
by solving equations provided by the interpretation of conditions on the natural equivalence $\psi$ 
in terms of bigraded vector spaces \cite{MR3121622,MR3420332}.\\
 
From the information in the graph in Figure \ref{fig} we can write up the $q$-fundamental solution in $\rep G_{\kp}$.
Recall that $\text{Irr}G_{\kp} = K_4 \cup \{ \rho \}$.
The bigraded vector spaces associated with the $\rep SU_{-1}(2)$-module category $\rep G_{\kp}$ are denoted by 
$H_{\rho g}$ and $H_{g \rho}$ for $g \in K_4= \{e,a,b,c \}$.
They are all one dimensional so we write unit vectors as 
$\xi_{\rho g} \in H_{\rho g}$ and $\xi_{g \rho} \in H_{g \rho}$.
Therefore the $q$-fundamental solution $\R$ in $\rep G_{\kp}$ is described by the vectors
\[
\sqrt{2} \xi_{g \rho} \otimes \xi_{\rho g}, 
\]
and
\[
\frac{1}{\sqrt{2}} \sum_{g \in K_4} \xi_{\rho g} \otimes  \xi_{g \rho}.
\]
The vector spaces associated with the the $\rep SU_{-1}(2)$-module category $\hilb_f$ 
are $H_{\rho}$ and $H_g$ of dimensions 2 and 1.
Here the unital maps of the $\rep SU_{-1}(2)$-module homomorphisms are expressed as
\[
\psi_g \colon H_{\rho} \otimes H_{\rho g} \to H_{1/2} \otimes H_{g}
\]
for $g \in K_4= \{e,a,b,c \}$ and 
\[
\psi_{\rho} \colon \bigoplus_{g \in K_4} H_g \otimes H_{g \rho} \to H_{1/2} \otimes H_{\rho}
\]
satisfying the following commutative diagrams:\\
\begin{figure}[h]
\begin{minipage}[b]{0.45\textwidth}
\centerline{
\xymatrix@=1em{ H_g \ar[rr]^>>>>>>{\text{id} \otimes \R}  \ar[ddrr]_{\R \otimes \text{id}}& 
&H_g \otimes H_{g \rho} \otimes H_{\rho g} \ar[dd]^{(\text{id} \otimes \psi_g)(\psi_{\rho} \otimes \text{id})}
\\ &&
\\ & & H_{1/2} \otimes H_{1/2} \otimes H_g,
}}
\end{minipage}
\hspace{0.3cm}
\begin{minipage}[b]{0.45\textwidth}
\centerline{
\xymatrix@=1em{ H_{\rho} \ar[rr]^>>>>>>{\text{id} \otimes \R}  \ar[ddrr]_{\R \otimes \text{id}}& 
& \bigoplus_{g \in K_4} \ H_{\rho} \otimes H_{\rho g} \otimes H_{g \rho} 
\ar[dd]^{(\text{id} \otimes \psi_{\rho})(\psi_g \otimes \text{id})}
\\ &&
\\ & & H_{1/2} \otimes H_{1/2} \otimes H_{\rho}.
}}
\end{minipage}
\end{figure}

From the projections $P_i \in M_4(\mathbb{C})$ in 
the tensor product of fundamental representation $U$ of $G_{\kp}$ with itself
$U \otimes U = \sum_{i=1}^4 p_i \otimes u_i$, 
we can compute the maps $\psi_g$, $\psi_\rho$ concretely.

Let $\{ \xi_1, \xi_2 \}$ be an orthonormal basis of $H_{\rho}.$

\begin{thm}
The unitary maps 
\begin{align*}
\psi_e &= 
 \begin{pmatrix}
 0 & 1 \\ 
 1 & 0
 \end{pmatrix},
\psi_a = \frac{1}{\sqrt{2}}
 \begin{pmatrix}
 1 & -1 \\ 
 1 & 1
 \end{pmatrix},
 \psi_b = 
 \begin{pmatrix}
 1 & 0 \\ 
 0 & -1
 \end{pmatrix},  
\psi_c = \frac{1}{\sqrt{2}}
 \begin{pmatrix}
 1 & 1 \\ 
 -1 & 1
 \end{pmatrix}, \\ 
 \psi_\rho &= \frac{1}{2}
 \begin{pmatrix}
 0 & 1 & \sqrt{2} & 1 \\
 \sqrt{2} & 1 & 0 & -1 \\
 \sqrt{2} & -1 & 0 & 1 \\
0 & 1 & \sqrt{2} & 1 \\
 \end{pmatrix}
\end{align*} 
associated with the $\rep SU_{-1}(2)$-module homomorphism $\rep G_{\kp} \to \hilb_{\text{f}}$ 
make the above diagrams commutes.
Here the matrix presentation of $\psi_\rho$ is with respect to the basis 
\[
\{ \xi_e \otimes \xi_{e \rho}, \ \xi_a \otimes \xi_{a \rho}, \ \xi_b \otimes \xi_{b \rho}, \ \xi_c \otimes \xi_{c \rho} \}
\]
of $ \bigoplus_{g \in K_4} H_g \otimes H_{g \rho}$ and the basis 
\[
\{ e_1 \otimes \xi_1, \ e_1 \otimes \xi_2,\  e_2 \otimes \xi_1, \ e_2 \otimes \xi_2 \}
\]
of $H_{1/2} \otimes H_{\rho}$.
\end{thm}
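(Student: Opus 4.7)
The plan is to verify the two commutative diagrams by direct coordinate computation in the chosen bases. The main ingredients are the $q=-1$ fundamental solution $R$ on $H_{1/2}$ in $\rep SU_{-1}(2)$ (which, since $F\bar F = +I$ for $q<0$, is the symmetric vector up to normalization), the explicit $R$-vectors for $\rep G_{\kp}$ stated immediately before the theorem, and the decomposition $U \otimes U = \sum_{i=1}^4 P_i \otimes u_i$ with the projections $P_i$ listed just above. Conceptually, the $\psi$-maps are the matrices realizing how the trivial-character component of $U\otimes U$ embeds into each isotypic piece, so they are essentially forced by the $P_i$.

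For the first diagram, I would fix $g \in K_4$ and chase $\xi_g \in H_g$. The bottom leg $R \otimes \text{id}_{H_g}$ produces $R \otimes \xi_g \in H_{1/2} \otimes H_{1/2} \otimes H_g$. The top leg, using the $\rep G_{\kp}$ fundamental solution on $H_{g\rho} \otimes H_{\rho g}$, yields $\sqrt{2}\,(\text{id}_{H_{1/2}} \otimes \psi_g)(\psi_\rho(\xi_g \otimes \xi_{g\rho}) \otimes \xi_{\rho g})$. Equating the two gives one equation per $g \in K_4$ relating the $g$-th column of $\psi_\rho$ (with respect to the decomposition $\bigoplus_g H_g \otimes H_{g\rho}$) to $\psi_g$; substituting the stated $2\times 2$ matrices, each becomes a $2 \times 2$ check whose coefficients are pinned by the $P_i$.

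The second diagram is checked analogously, starting from $\xi_i \in H_\rho$: the top leg is $\frac{1}{\sqrt{2}} \sum_g (\text{id}_{H_{1/2}} \otimes \psi_\rho)(\psi_g \otimes \text{id})(\xi_i \otimes \xi_{\rho g} \otimes \xi_{g\rho})$, and comparing with $R \otimes \xi_i$ becomes a $4 \times 4$ identity that, with the $\psi_g$ already determined, pins down $\psi_\rho$ up to the phase fixed by the basis choice and matches the stated matrix. The main obstacle is the bookkeeping of normalizations and signs: one must align the factor $\tau = 1/2$ from the Tambara--Yamagami associator and the signs coming from $\chi_c$ with the $\sqrt{2}$-factors and signs in the $q=-1$ fundamental solution. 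Once these conventions are fixed uniformly, the remaining verification is a mechanical matrix calculation that reproduces the formulas in the statement.
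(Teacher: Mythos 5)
Your proposal is correct and follows essentially the same route as the paper: the paper's proof also verifies the commutativity of the diagrams by chasing a basis vector through both legs, using the explicit $\R$-vectors and the stated matrices (it writes out only the case $g=b$ of the first diagram, with $(\R\otimes\id)(\xi_b)=(e_1\otimes e_1+e_2\otimes e_2)\otimes\xi_b$, and declares the remaining cases similar). Your additional remarks about the projections $P_i$ pinning down the $\psi$'s and about aligning the $\tau=1/2$ and $\chi_c$ conventions with the $q=-1$ normalization are consistent with the paper's setup, which likewise introduces the $P_i$ for exactly this purpose before the theorem.
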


\begin{proof}
We show that the equation
\begin{align} \label{psi_eq}
( \id \otimes \psi_g)(\psi_\rho \otimes \id)(\id \otimes \R )(\xi_g) = (\R \otimes \id)(\xi_g)
\end{align}
holds for the unit vector $\xi_g$ in $H_g$ in the case $g=b$. 
On the right hand side we have
\begin{align*}
(\R \otimes \id)(\xi_b)
= (e_1 \otimes e_1 + e_2 \otimes e_2) \otimes \xi_b,
\end{align*}
while on the left hand side we have
\begin{align*}
( \id \otimes \psi_b)(\psi_\rho \otimes \id)(\id \otimes \R )(\xi_b)
&= \sqrt{2}( \id \otimes \psi_b)(\psi_\rho \otimes \id)(\xi_b \otimes \xi_{b \rho} \otimes \xi_{\rho b}) \\
&= \sqrt{2}( \id \otimes \psi_b) \left( \frac{1}{\sqrt{2}} (e_1 \otimes \xi_1 - e_2 \otimes \xi_2) \right) \\
&= e_1 \otimes e_1 \otimes \xi_b + e_2 \otimes e_2 \otimes \xi_b.
\end{align*}
Therefore, (\ref{psi_eq}) for $g=b$ holds. Other cases can be shown similarly.
\end{proof}

\end{document}